\pgfplotsset{compat = newest}
\newtheorem{theorem}{Theorem}
\newtheorem{proposition}{Proposition}
\newtheorem{lemma}{Lemma}
\newtheorem{corollary}{Corollary}
\theoremstyle{remark}
\newtheorem{remark}{Remark}
\theoremstyle{definition}
\newtheoremstyle{notes}
{3pt}
{3pt}
{}
{}
{\bfseries}
{:}
{.4em}
{}
\theoremstyle{notes}
\newtheorem*{keywords}{Keywords}
\newtheorem*{subjclass}{AMS MSC 2020}
\DeclareMathOperator{\spn}{span}
\DeclareMathOperator{\proj}{proj}
\DeclareMathOperator{\conv}{conv}
\title{Bounds on some geometric functionals of high dimensional Brownian convex hulls and their inverse processes}
\author{
Hugo Panzo 
\\
\href{mailto:hugo.panzo@slu.edu}{\texttt{{\small hugo.panzo@slu.edu}}}
\and
Evan Socher
\\ 
\href{mailto:evan.socher@slu.edu}{\texttt{{\small evan.socher@slu.edu}}}
}
\date{\today}
\begin{document}

\maketitle

\begin{abstract}
We prove two-sided bounds on the expected values of several geometric functionals of the convex hull of Brownian motion in $\mathbb{R}^n$ and their inverse processes. This extends some recent results of McRedmond and Xu \cite{McRedmond_Xu}, Jovaleki\'{c} \cite{Jovalekic}, and Cygan, \v{S}ebek, and the first author \cite{hull_bounds} from the plane to higher dimensions. Our main result shows that the average time required for the convex hull in $\mathbb{R}^n$ to attain unit volume is at most $n\sqrt[n]{n!}$. The proof relies on a novel procedure that embeds an $n$-simplex of prescribed volume within the convex hull of the Brownian path run up to a certain stopping time. All of our bounds capture the correct order of asymptotic growth or decay in the dimension $n$.
\end{abstract}

\begin{keywords}
Brownian motion, convex hull, convex optimization, inverse process, diameter, circumradius. 
\end{keywords}

\begin{subjclass}
Primary 60D05, 60J65; Secondary 52A20, 90C25.
\end{subjclass}


\section{Introduction}

Let $\boldsymbol{W}=\{\boldsymbol{W}_t:t\geq 0 \}$ denote Brownian motion in $\mathbb{R}^n$ starting at the origin $\boldsymbol{0}$. That is,  
\[
\boldsymbol{W}_t=\left(W_t^{(1)},\dots, W_t^{(n)}\right), t\geq 0,
\]
where each coordinate $\{W_t^{(j)}:t\geq 0\}$, $j=1,\dots, n$, is standard Brownian motion in $\mathbb{R}$, independent of the other coordinates. For a set $\mathcal{A}\subset\mathbb{R}^n$, let $\conv \mathcal{A}$ denote the \emph{convex hull} of $\mathcal{A}$. In other words, $\conv \mathcal{A}$ is the smallest convex subset of $\mathbb{R}^n$ that contains $\mathcal{A}$. Define $\mathcal{H}_t$ to be the convex hull of the path of $\boldsymbol{W}$ run up to time $t$, namely,
\[
\mathcal{H}_t:=\conv\{\boldsymbol{W}_s:0\leq s\leq t \}.
\]

This paper is concerned with estimating the expected values of several geometric functionals of $\mathcal{H}_t$ and their inverse processes. The particular functionals of $\mathcal{H}_t$ that we study are volume, surface area, diameter, and circumradius. For a \emph{convex body} $\mathcal{K}\subset\mathbb{R}^n$ (nonempty, convex, and compact subset), let $V(\mathcal{K})$ denote its $n$-dimensional Lebesgue measure, $S(\mathcal{K})$ the $n-1$ dimensional Lebesgue measure of its topological boundary, $D(\mathcal{K})$ its diameter, and $R(\mathcal{K})$ its circumradius. Since the path $\{\boldsymbol{W}_s:0\leq s\leq t \}\subset\mathbb{R}^n$ is almost surely compact, it follows that $\mathcal{H}_t$ is almost surely a convex body. Hence, we can define the processes $V_t$, $S_t$, $D_t$, and $R_t$ by
\begin{equation}\label{eq:process}
X_t:=X(\mathcal{H}_t),~t\geq 0,
\end{equation}
where $X\in\{V,S,D,R\}$.

As these four processes are almost surely nondecreasing functions of time, we can also study their right-continuous \emph{inverse processes}. The inverse process tells us how long we must wait for the functional to exceed a given value. More precisely, we have the definition 
\begin{equation}\label{eq:inverse}
\Theta_y^X:=\inf\{t\geq 0:X_t>y\},~y\geq 0,
\end{equation}
where $X\in\{V,S,D,R\}$. As remarked upon in \cite{hull_bounds}, the functionals that we consider can be used to quantify the size of $\mathcal{H}_t$, while their inverse processes provide some information on its speed of growth. The scaling properties of Brownian motion, Lebesgue measure, and Euclidean distance imply that we can limit our study to the expected values of these functionals and their inverse processes at time $t=1$ without any loss of generality; see Proposition \ref{prop:distributional_equalities} below. 

The study of the convex hull of Brownian motion has a long history going back to L\'{e}vy in the 1940's; see \cite{Levy}. Let us summarize some more recent results that involve the functionals we are interested in. Most impressive are the explicit formulas for the expected values of $V_1$ and $S_1$ that hold in all dimensions. These expressions were derived by Eldan in \cite{Eldan} and are given by
\begin{equation}\label{eq:Eldan}
\mathbb{E}\left[V_1\right]=\left(\frac{\pi}{2}\right)^{n/2}\frac{1}{\Gamma(1+n/2)^2},~~~\mathbb{E}\left[S_1\right]=\frac{2(2\pi)^{(n-1)/2}}{\Gamma(n)}.
\end{equation}
The formula for $\mathbb{E}[V_1]$ with $n=2$ had appeared previously in \cite{El_Bachir}, while for $\mathbb{E}[S_1]$, the formula had been derived for $n=2$ in \cite{Takacs_solution} and for $n=3$ in \cite{stable_hull}. The formulas \eqref{eq:Eldan} for all dimensions were subsequently recovered in \cite{Sobolev_balls} using different methods.

In contrast to the work of Eldan, the papers \cite{McRedmond_Xu}, \cite{Jovalekic}, and \cite{hull_bounds} deal exclusively with the planar case $n=2$ and are limited to estimates instead of exact formulas like \eqref{eq:Eldan}. However, these papers treat different geometric functionals of $\mathcal{H}_1$ which are seemingly not amenable to the methods of \cite{Eldan} and \cite{Sobolev_balls}. More specifically, \cite{McRedmond_Xu} and \cite{Jovalekic} derive bounds for the expected diameter of $\mathcal{H}_1$ when $n=2$, while \cite{hull_bounds} does the same for the expected circumradius and inradius. Moreover, \cite{hull_bounds} initiates the study of the inverse processes \eqref{eq:inverse} of all five geometric functionals (volume, surface area, diameter, circumradius, and inradius) by computing two-sided bounds on their expected values when $n=2$. The paper \cite{hull_bounds} also complements its bounds with estimates from extensive Monte Carlo simulations. 

The main contributions of the present paper are to extend most of the bounds derived in \cite{McRedmond_Xu}, \cite{Jovalekic}, and \cite{hull_bounds} from the plane to higher dimensions. All of our bounds capture the correct order of asymptotic growth or decay in the dimension $n$ in the sense that the upper and lower bounds are asymptotically equivalent up to a constant factor as $n\to\infty$. We were unable to obtain bounds with matching orders of asymptotic growth or decay for the expected values of the inradius and its inverse process so we leave those cases for future investigation.


\section{Main results}

Our first two theorems concern the expected values of the inverse processes of the volume and surface area functionals that were defined in \eqref{eq:process} and \eqref{eq:inverse}. These extend the corresponding bounds of \cite{hull_bounds} from the plane to higher dimensions and also complement Eldan's exact formulas \eqref{eq:Eldan} for the expected values of the functionals themselves. These theorems are proved in Section \ref{sec:VS_inverse}. We remark that the proof of the upper bound in Theorem \ref{thm:inverse_volume} required significantly more work than any of the other theorems in this paper. 

\begin{theorem}\label{thm:inverse_volume}
For any dimension $n\geq 1$, the inverse volume process satisfies 
\[
\frac{2}{\pi}\Gamma(1+n/2)^{4/n}\leq\mathbb{E}\left[\Theta_1^V\right]\leq n\sqrt[n]{n!}.
\]
\end{theorem}

\begin{remark}\label{rem:high_dimensions}
When $n=2$, the upper bound from \cite{hull_bounds} is better than that of Theorem \ref{thm:inverse_volume}, since the former uses the minimum of two independent inverse range processes in the first stage instead of the exit time of a two dimensional ball. The latter method is more favorable in higher dimensions, however, not only for its computational simplicity, but because for fixed range and radius, the expected value of the minimum of $n$ independent inverse range processes decays like $1/\log n$, while that of the exit time of an $n$-dimensional ball decays like $1/n$; see also Remarks \ref{rem:diameter} and \ref{rem:circumradius}.
\end{remark}

\begin{theorem}\label{thm:inverse_area}
For any dimension $n\geq 2$, the inverse surface area process satisfies 
\[
\frac{1}{2\pi}\left(\frac{1}{2}\Gamma(n)\right)^{2/(n-1)}\leq \mathbb{E}\left[\Theta_1^S\right]\leq \left(\frac{\sqrt{\pi}\, n}{\sqrt[n]{\Gamma(1+n/2)}}\right)^{2/(n-1)} n\sqrt[n]{n!}.
\]
\end{theorem}

The asymptotic behavior of these bounds is straightforward to deduce from Stirling's approximation and can be summarized in the following corollary. This result verifies our claim that the upper and lower bounds for the expected values of both inverse processes have the same asymptotic order as $n\to\infty$, namely, $n^2$.

\begin{corollary}
For either $X=V$ or $X=S$, we have
\[
\liminf_{n\to\infty}\frac{\mathbb{E}\left[\Theta_1^X\right]}{n^2}\geq \frac{1}{2\pi e^2}~\text{ and }~\limsup_{n\to\infty}\frac{\mathbb{E}\left[\Theta_1^X\right]}{n^2} \leq \frac{1}{e}.
\]
\end{corollary}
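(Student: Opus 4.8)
The plan is to estimate each side of the two-sided bounds in Theorems~\ref{thm:inverse_volume} and~\ref{thm:inverse_area} using Stirling's approximation, divide by $n^2$, and pass to the limit. The single most useful estimate is that for a fixed constant $a>0$,
\[
\Gamma(1+an)^{1/n}\sim\left(\frac{an}{e}\right)^{a}\quad\text{as }n\to\infty,
\]
which follows from $\log\Gamma(1+an)=an\log(an)-an+O(\log n)$ together with $\frac1n\cdot O(\log n)\to 0$. Taking $a=1$ gives $\sqrt[n]{n!}=\Gamma(1+n)^{1/n}\sim n/e$, and taking $a=1/2$ gives $\Gamma(1+n/2)^{1/n}\sim\sqrt{n/(2e)}$. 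These two special cases drive the entire computation.

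For the upper bounds I would first treat the common factor $n\sqrt[n]{n!}$. By the $a=1$ case, $n\sqrt[n]{n!}\sim n^2/e$, so $\frac{n\sqrt[n]{n!}}{n^2}\to\frac1e$. This immediately yields $\limsup_n\mathbb{E}[\Theta_1^V]/n^2\le 1/e$ from Theorem~\ref{thm:inverse_volume}. For $X=S$ it remains to show that the prefactor $\left(\omega_n\kappa_n^{(1-n)/n}\right)^{2/(n-1)}$ tends to $1$. Using $\omega_n=n\kappa_n$ and $\kappa_n=\pi^{n/2}/\Gamma(1+n/2)$, the base simplifies to $n\kappa_n^{1/n}$, and the $a=1/2$ estimate gives $\kappa_n^{1/n}\sim\sqrt{2\pi e/n}$, hence $n\kappa_n^{1/n}\sim\sqrt{2\pi e\,n}$. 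Thus the logarithm of the base is $O(\log n)$, and multiplying by the exponent $2/(n-1)\to 0$ sends the prefactor to $1$. Combining with the limit for $n\sqrt[n]{n!}$ gives $\limsup_n\mathbb{E}[\Theta_1^S]/n^2\le 1/e$.

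For the lower bounds I would evaluate each expression directly. In the volume case the $a=1/2$ estimate gives $\Gamma(1+n/2)^{4/n}\sim(n/(2e))^2=n^2/(4e^2)$, so $\frac{2}{\pi}\Gamma(1+n/2)^{4/n}\sim\frac{n^2}{2\pi e^2}$, and dividing by $n^2$ yields $\liminf_n\mathbb{E}[\Theta_1^V]/n^2\ge 1/(2\pi e^2)$. In the surface area case, writing $\Gamma(n)=(n-1)!$ and applying Stirling gives $\big(\tfrac12\Gamma(n)\big)^{2/(n-1)}\sim\big((n-1)/e\big)^2\sim n^2/e^2$, where the subexponential factor $\big(\tfrac12\sqrt{2\pi(n-1)}\big)^{2/(n-1)}$ again tends to $1$; hence $\frac{1}{2\pi}\big(\tfrac12\Gamma(n)\big)^{2/(n-1)}\sim\frac{n^2}{2\pi e^2}$ and the same lower bound follows.

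The only genuinely delicate point is verifying that the auxiliary prefactors—most notably $\left(\omega_n\kappa_n^{(1-n)/n}\right)^{2/(n-1)}$ in Theorem~\ref{thm:inverse_area}—do not contribute to the leading asymptotics. The mechanism is always the same: after taking the $n$-th (or $(n-1)$-th) root, the dominant factorial growth of the Gamma functions is reduced to polynomial growth in $n$, whose logarithm is $O(\log n)=o(n)$, so raising to a power of order $1/n$ drives these factors to $1$. Once this is established, the matching $n^2$ orders of the upper and lower bounds give the stated inequalities, with the $\liminf$ governed by the lower bounds and the $\limsup$ by the upper bounds.
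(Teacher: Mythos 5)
Your proposal is correct and takes essentially the same approach as the paper, which offers no detailed argument at all but simply asserts that the corollary is ``straightforward to deduce from Stirling's approximation'' applied to the bounds of Theorems \ref{thm:inverse_volume} and \ref{thm:inverse_area}; your computation supplies exactly those omitted details. In particular, your verification that the prefactor $\left(\omega_n\kappa_n^{(1-n)/n}\right)^{2/(n-1)} = \left(n\kappa_n^{1/n}\right)^{2/(n-1)}\to 1$ correctly handles the one genuinely delicate point the paper leaves implicit.
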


Our next two theorems involve the diameter functional and its inverse process. For these results, it is plain to see that the upper and lower bounds have matching asymptotic orders as $n\to\infty$. The proofs are given in Section \ref{sec:diameter}.

\begin{theorem}\label{thm:diameter}
For any dimension $n\geq 1$, the diameter process satisfies 
\[
\sqrt{n}\leq\mathbb{E}\left[D_1\right]\leq 2\sqrt{\log 2}\sqrt{n}.
\]
\end{theorem}

\begin{theorem}\label{thm:inverse_diameter}
For any dimension $n\geq 1$, the inverse diameter process satisfies 
\[
\frac{1}{4\log 2}\frac{1}{n}\leq\mathbb{E}\left[\Theta_1^D\right]\leq \frac{1}{n}.
\]
\end{theorem}

\begin{remark}\label{rem:diameter}
When $n=2$, the lower bounds from \cite{McRedmond_Xu,Jovalekic} and the upper bound from \cite{hull_bounds} are better than those of Theorems \ref{thm:diameter} and \ref{thm:inverse_diameter}, respectively, for the same reason described in Remark \ref{rem:high_dimensions}. However, just like in that case, the methods of the present paper become more effective in high dimensions.
\end{remark}

Our last two theorems deal with the circumradius functional and its inverse process. For these results, it is also clear that the upper and lower bounds have the same asymptotic order as $n\to\infty$. The proofs are given in Section \ref{sec:circumradius}.

\begin{theorem}\label{thm:circumradius}
For any dimension $n\geq 1$, the circumradius process satisfies 
\[
\frac{1}{2}\sqrt{n}\leq\mathbb{E}\left[R_1\right]\leq \sqrt{\log2}\sqrt{n}.
\]
\end{theorem}

\begin{theorem}\label{thm:inverse_circumradius}
For any dimension $n\geq 1$, the inverse circumradius process satisfies 
\[
\frac{1}{\log 2}\frac{1}{n}\leq\mathbb{E}\left[\Theta^R_1\right]\leq 4\frac{1}{n}.
\]
\end{theorem}

\begin{remark}\label{rem:circumradius}
When $n=2$, the lower bound of Theorem \ref{thm:circumradius} and the upper bound of Theorem  \ref{thm:inverse_circumradius} are worse than those of \cite{hull_bounds}. This is also due to the reason described in Remark \ref{rem:high_dimensions}. Similarly to that case, the methods of the present paper become more effective in high dimensions.
\end{remark}

\begin{remark}
When $n=1$, $V_t$, $D_t$, and $2R_t$ are nothing but the \emph{range} of $\boldsymbol{W}$. Hence, their inverse processes can be expressed in terms of the \emph{inverse range}. The distributions of the range and its inverse are known explicitly; see \cite{Feller,Imhof}.
\end{remark}


\section{Preliminaries}

Following \cite{Schneider}, we denote the volume and surface area of the unit ball in $\mathbb{R}^n$ by $\kappa_n$ and $\omega_n$, respectively. These quantities are given by the well-known formulas 
\begin{equation}\label{eq:unit_ball}
\kappa_n=\frac{\pi^{n/2}}{\Gamma(1+n/2)},~~~\omega_n=\frac{2\pi^{n/2}}{\Gamma(n/2)}.
\end{equation}

Another well-known formula that is essential to our results is that of the mean exit time of $n$-dimensional Brownian motion from a ball of radius $r>0$ in $\mathbb{R}^n$ when it starts from the center of the ball. Let $\tau_r$ denote this exit time. Then we have
\begin{equation}\label{eq:ball_moment}
\mathbb{E}[\tau_r]=\frac{r^2}{n}.
\end{equation}
This formula can be deduced from a routine martingale or PDE argument; see Problem 4.2.25 of \cite{K&S}.

As mentioned in the introduction, the familiar scaling properties of Brownian motion, Lebesgue measure, and Euclidean distance lead to convenient distributional identities that allow us to focus our study on the expected values of the geometric functionals of $\mathcal{H}_t$ and their inverse processes at time $t=1$ without any loss of generality. These distributional identities are the $n$-dimensional version of \cite[Proposition 4.1]{hull_bounds} and are listed in the following proposition. We stress that these distributional identities hold for fixed times and not as processes. 

\begin{proposition}\label{prop:distributional_equalities}\
Let $n\geq 2$ and consider the $V$, $S$, $D$, and $R$ functionals of the convex hull of $n$-dimensional Brownian motion along with their inverse processes that were defined in \eqref{eq:inverse}. Then for all $t\geq 0$ and $y\geq 0$, we have the following equalities in distribution:
\begin{enumerate}[label=(\roman*)]

\item $\displaystyle V_t\stackrel{d}{=}t^{n/2}V_1,~~\Theta_y^V\stackrel{d}{=}y^{2/n}\Theta_1^V,~~\Theta_1^V\stackrel{d}{=}V_1^{-2/n}$;

\item $\displaystyle S_t\stackrel{d}{=}t^{(n-1)/2}S_1,~~\Theta_y^S\stackrel{d}{=}y^{2/(n-1)}\Theta_1^S,~~\Theta_1^S\stackrel{d}{=}S_1^{-2/(n-1)}$;

\item $\displaystyle D_t\stackrel{d}{=}\sqrt{t} D_1,~~\Theta_y^D\stackrel{d}{=}y^2\Theta_1^D,~~\Theta_1^D\stackrel{d}{=}D_1^{-2}$;

\item $\displaystyle R_t\stackrel{d}{=}\sqrt{t} R_1,~~\Theta_y^R\stackrel{d}{=}y^2\Theta_1^R,~~\Theta_1^R\stackrel{d}{=}R_1^{-2}$.

\end{enumerate}

\end{proposition}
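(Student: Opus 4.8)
The plan is to reduce all twelve identities to a single fixed-time scaling property of the random convex body $\mathcal{H}_t$ combined with the dilation-homogeneity of the four functionals. First I would fix $t>0$ and $c>0$ and reparametrize $s=cr$, so that $\mathcal{H}_{ct}=\conv\{\boldsymbol{W}_{cr}:0\le r\le t\}$. The Brownian scaling identity $\{\boldsymbol{W}_{cr}:r\ge0\}\stackrel{d}{=}\{\sqrt{c}\,\boldsymbol{W}_{r}:r\ge0\}$, together with the fact that forming the convex hull commutes with the dilation $x\mapsto\sqrt{c}\,x$, then gives $\mathcal{H}_{ct}\stackrel{d}{=}\sqrt{c}\,\mathcal{H}_{t}$; choosing $c=t$ yields the fixed-time identity $\mathcal{H}_t\stackrel{d}{=}\sqrt{t}\,\mathcal{H}_1$. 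Recalling that $V(\lambda\mathcal{K})=\lambda^nV(\mathcal{K})$, $S(\lambda\mathcal{K})=\lambda^{n-1}S(\mathcal{K})$, $D(\lambda\mathcal{K})=\lambda D(\mathcal{K})$, and $R(\lambda\mathcal{K})=\lambda R(\mathcal{K})$ for every convex body $\mathcal{K}$ and $\lambda>0$, I would apply these with $\lambda=\sqrt{t}$ to obtain the first identity in each of (i)--(iv), namely $V_t\stackrel{d}{=}t^{n/2}V_1$, $S_t\stackrel{d}{=}t^{(n-1)/2}S_1$, $D_t\stackrel{d}{=}\sqrt{t}\,D_1$, and $R_t\stackrel{d}{=}\sqrt{t}\,R_1$.

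For the inverse-process identities I would treat all four cases uniformly by writing $X\in\{V,S,D,R\}$ with dilation exponent $p\in\{n,n-1,1,1\}$, so that the first step reads $X_t\stackrel{d}{=}t^{p/2}X_1$. Since each $X_\cdot$ is almost surely nondecreasing, the event $\{\Theta_y^X>t\}$ coincides with $\{X_t\le y\}$ up to a null set, whence $\mathbb{P}(\Theta_y^X>t)=\mathbb{P}(X_t\le y)=\mathbb{P}(X_1\le y\,t^{-p/2})$. Computing the analogous tails $\mathbb{P}(y^{2/p}\Theta_1^X>t)$ and $\mathbb{P}(X_1^{-2/p}>t)$ and matching them against this expression gives both $\Theta_y^X\stackrel{d}{=}y^{2/p}\Theta_1^X$ and $\Theta_1^X\stackrel{d}{=}X_1^{-2/p}$; specializing $p$ recovers the exponents $2/n$, $2/(n-1)$, and $2$ appearing in (i)--(iv).

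The main obstacle is not the algebra but two measure-theoretic points. The first is to make the equality in distribution of the random bodies $\mathcal{H}_{ct}$ and $\sqrt{c}\,\mathcal{H}_t$ rigorous and to transfer it to the functionals; I would topologize convex bodies by the Hausdorff metric, under which $V$, $S$, $D$, and $R$ are continuous and hence Borel measurable, so that the body identity pushes forward to the stated functional identities. The second, and more delicate, point is that the tail identity $\{\Theta_y^X>t\}=\{X_t\le y\}$ holds only up to the event $\{X_t=y\}$, so the candidate expressions for $\Theta_y^X$ and $X_1^{-2/p}$ agree only off $\{X_t=y\}$; to close this gap I would invoke the absolute continuity (atomlessness) of the law of $X_1$, which gives $\mathbb{P}(X_t=y)=0$ for each fixed $y$. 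This atomlessness is the only genuinely nontrivial input and can be established exactly as in the planar case treated in \cite[Proposition 4.1]{hull_bounds}; alternatively, the $y$-scaling identity alone can be obtained atom-free from the process-level scaling $\{X_{ct}\}_{t\ge0}\stackrel{d}{=}\{c^{p/2}X_t\}_{t\ge0}$ and the corresponding scaling of the inverse functional.
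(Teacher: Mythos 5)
Your proposal is correct and follows essentially the same route as the paper: Brownian scaling plus the fact that taking convex hulls commutes with dilations yields $\mathcal{H}_{ct}\stackrel{d}{=}\sqrt{c}\,\mathcal{H}_t$, homogeneity of $V,S,D,R$ gives the fixed-time identities, and the inverse-process identities follow by matching tail probabilities, exactly as in the paper's proof of part (i). The only real difference is that you write $\mathbb{P}\left(\Theta_y^X>t\right)=\mathbb{P}(X_t\le y)$ and explicitly flag the atom/null-set issue, whereas the paper writes $\mathbb{P}\left(\Theta_y^V>t\right)=\mathbb{P}(V_t<y)$ and passes over the same point in silence, so your added care (and your process-level fallback for the $y$-scaling identity) is compatible with, and if anything slightly strengthens, the published argument.
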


\begin{proof}[Proof of Proposition \ref{prop:distributional_equalities}]
We follow the proof of Proposition 4.1 in \cite{hull_bounds}. The scaling property of Brownian motion implies that for any $t\geq 0$ and $\lambda>0$, we have
\begin{align}
\mathcal{H}_{\lambda t}=\conv\{\boldsymbol{W}_s:0\leq s \leq \lambda t\}&=\conv\{\boldsymbol{W}_{\lambda s}:0\leq s \leq t\}\nonumber \\
&\stackrel{d}{=}\sqrt{\lambda}\conv\{\boldsymbol{W}_{s}:0\leq s \leq t\}\nonumber \\
&=\sqrt{\lambda}\mathcal{H}_t.\label{eq:hull_scaling}
\end{align}
The distributional identities of Proposition \ref{prop:distributional_equalities} all follow from \eqref{eq:hull_scaling}. As the proofs are similar, we give the details only for part (i) and leave the rest to the reader.  

The first distributional identity of part (i) is trivial when $t=0$, so there is no loss of generality in assuming that $t>0$. For $u\geq 0$, we can use \eqref{eq:hull_scaling} to write
\begin{equation}\label{eq:volume_scaling}
V_{t u}=V(\mathcal{H}_{t u})\stackrel{d}{=}V(\sqrt{t}\mathcal{H}_u) =t^{n/2} V_u.
\end{equation}
Taking $u=1$ proves the identity. 

Similarly to the first identity, we can also assume that $y>0$ when proving the second identity of part (i). Now for any $t\geq 0$, we can use \eqref{eq:volume_scaling} to write
\begin{align*}
\mathbb{P}\left(\Theta_y^V>t\right)=\mathbb{P}(V_t<y)=\mathbb{P}\left(\frac{1}{y}V_t<1\right)&=\mathbb{P}\left(V_{t/y^\frac{2}{n}}<1\right)\\
&=\mathbb{P}\left(\Theta_1^V>t/y^\frac{2}{n}\right)\\
&=\mathbb{P}\left(y^\frac{2}{n}\Theta_1^V>t\right).
\end{align*}
This shows that $\Theta_y^V$ and $y^{2/n}\Theta_1$ have the same distribution.

The proof of the last identity of part (i) can be deduced similarly via 
\[
\mathbb{P}\left(\Theta_1^V>t\right)=\mathbb{P}(V_t<1)=\mathbb{P}\left(t^{n/2}V_1<1\right)=\mathbb{P}\left(t<V_1^{-2/n}\right).
\]
\end{proof}


\section{Proofs of the main results}

\subsection{Inverse processes of volume and surface area}\label{sec:VS_inverse}

The proof of the upper bound in Theorem \ref{thm:inverse_area} employs an $n$-stage construction which stops and restarts $\boldsymbol{W}$ as it exits a sequence of hypercylinders of decreasing spherical dimension. This procedure allows us to embed a $n$-simplex $\mathcal{S}$ with prescribed volume within the convex hull of $\boldsymbol{W}$ at a certain stopping time $T_n$. This is essentially an $n$-dimensional version of the idea used to prove Proposition 1.6 of \cite{hull_bounds}. The construction is parameterized by a sequence of $n$ positive numbers $r_1,r_2,\dots,r_n$. These parameters are the radii of the spherical parts of the aforementioned hypercylinders. Upon completion of the procedure at time $T_n$, we necessarily have $V(\mathcal{H}_{T_n})\geq V(\mathcal{S})$, whence we deduce the upper bound
\begin{equation}\label{eq:simplex_bound}
\mathbb{E}\left[\Theta_{V(\mathcal{S})}^V\right] \leq \mathbb{E}[T_n].
\end{equation}
After computing $V(\mathcal{S})$ and $\mathbb{E}[T_n]$ explicitly in terms of the parameters, we use the forthcoming Lemma \ref{lem:optimization} to optimize the right-hand side of \eqref{eq:simplex_bound} under the constraint $V(\mathcal{S})\geq 1$ in order to obtain the best possible bound using this method.

As alluded to above, the upper bound of Theorem \ref{thm:inverse_volume} requires solving a convex optimization problem whose solution we split into the following two lemmas. The first lemma verifies that certain functions are indeed convex and the second lemma solves the optimization problem. We refer to \cite{Rockafellar} for the requisite convex analysis theory.

\begin{lemma}\label{lem:convexity}
Let $n\in\mathbb{N}$, and define the functions $f_n$ and $g_n$ by
\[
f_n(x_1,\dots,x_n):=\sum_{j=1}^n \frac{x_j^2}{n-j+1},~~(x_1,\dots,x_n)\in\mathbb{R}_{>0}^n,
\]
\[
g_n(x_1,\dots,x_n):=\frac{1}{x_1\cdots x_n},~~(x_1,\dots,x_n)\in\mathbb{R}_{>0}^n.
\]
Then $f_n$ and $g_n$ are both convex functions on the positive orthant $\mathbb{R}_{>0}^n$.
\end{lemma}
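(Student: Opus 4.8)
The plan is to treat the two functions separately, since the convexity of $f_n$ is essentially immediate and the only real content lies in $g_n$. For $f_n$, I would note that it is a quadratic form with diagonal Hessian: writing $c_j := 1/(n-j+1) > 0$, we have $f_n(x_1,\dots,x_n) = \sum_{j=1}^n c_j x_j^2$, whose Hessian is the diagonal matrix $\mathrm{diag}(2c_1,\dots,2c_n)$. Since all $c_j > 0$, this Hessian is positive definite, so $f_n$ is (strictly) convex on all of $\mathbb{R}^n$, in particular on $\mathbb{R}_{>0}^n$. Equivalently, $f_n$ is a finite sum of the one-variable convex functions $x_j \mapsto c_j x_j^2$, and a sum of convex functions is convex. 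Either phrasing disposes of this case.

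For $g_n$ I would argue via log-convexity, which I expect to be the cleanest route. Writing $g_n(x) = \prod_{j=1}^n x_j^{-1}$, we have
\[
\log g_n(x_1,\dots,x_n) = -\sum_{j=1}^n \log x_j .
\]
Each summand $-\log x_j$ is convex on $(0,\infty)$, since its second derivative is $1/x_j^2 > 0$, so $\log g_n$ is convex on $\mathbb{R}_{>0}^n$ as a sum of convex functions. Thus $g_n$ is log-convex. To pass from log-convexity to convexity of $g_n$ itself, I would use that $g_n = \exp(\log g_n)$ is the composition of the nondecreasing convex function $\exp$ with the convex function $\log g_n$, and such a composition is convex. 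This yields convexity of $g_n$ on the positive orthant with no further computation.

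The only (modest) obstacle is justifying the convexity of $g_n$, which, unlike $f_n$, is not a quadratic form; the log-convexity device is what circumvents the need to analyze the full Hessian. If a self-contained argument avoiding the log-convexity principle is preferred, I would instead compute the Hessian directly. A short calculation gives $\partial_i g_n = -x_i^{-1} g_n$ and
\[
\partial_i \partial_k g_n = \bigl(x_i^{-1} x_k^{-1} + \mathbbm{1}_{\{i=k\}}\, x_i^{-2}\bigr)\, g_n ,
\]
so the Hessian equals $g_n\,(u u^\top + \mathrm{diag}(x_1^{-2},\dots,x_n^{-2}))$ with $u = (x_1^{-1},\dots,x_n^{-1})^\top$. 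Since $g_n > 0$ on the positive orthant and both $u u^\top$ and the diagonal matrix are positive semidefinite (the latter strictly positive definite), the Hessian is positive definite, confirming convexity. I would present the log-convexity argument as the main proof and keep the Hessian computation in reserve as a direct verification.
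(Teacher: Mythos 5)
Your proof is correct, but your main argument for $g_n$ takes a genuinely different route from the paper. The paper handles both functions by the second-order criterion: for $g_n$ it computes the Hessian entries $h_{jk}$ explicitly and then verifies positive semi-definiteness by the algebraic identity
\[
\langle\boldsymbol{z},\boldsymbol{H}\boldsymbol{z}\rangle
=\frac{1}{x_1\cdots x_n}\left(\sum_{j=1}^n \frac{z_j^2}{x_j^2}+\Bigl(\sum_{j=1}^n \frac{z_j}{x_j}\Bigr)^2\right)\geq 0,
\]
which is precisely the rank-one-plus-diagonal decomposition $\boldsymbol{H}=g_n\bigl(\boldsymbol{u}\boldsymbol{u}^\top+\mathrm{diag}(x_1^{-2},\dots,x_n^{-2})\bigr)$ that you keep in reserve; so your ``backup'' Hessian computation is essentially the paper's proof, just packaged as a matrix decomposition rather than as a completed quadratic form. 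Your primary argument --- that $\log g_n=-\sum_j\log x_j$ is convex as a sum of convex one-variable functions, and that $g_n=\exp(\log g_n)$ is then convex as the composition of the nondecreasing convex function $\exp$ with a convex function --- is cleaner and avoids all computation, at the cost of invoking the composition rule (equivalently, the standard fact that log-convex functions are convex). The paper's route is more elementary and self-contained, needing only the Hessian characterization of convexity; yours is shorter, generalizes immediately to any function of the form $\prod_j x_j^{-a_j}$ with $a_j\geq 0$, and in fact yields the stronger conclusion of log-convexity (and, via your positive-definite Hessian, strict convexity), neither of which is needed downstream in Lemma 2 but both of which are correct.
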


\begin{proof}[Proof of Lemma \ref{lem:convexity}]
We establish convexity by showing that the Hessian matrices of $f_n$ and $g_n$ are both positive semi-definite on the positive orthant $\mathbb{R}_{>0}^n$; see \cite[Theorem 4.5]{Rockafellar}. This is a trivial matter for $f_n$, since its Hessian matrix is a constant diagonal matrix with positive diagonal entries. The case of $g_n$ requires a bit more work but is also straightforward. Indeed, routine calculations show that the entries of the Hessian matrix $\boldsymbol{H}$ of $g_n$ are given by
\[
h_{jk}=\begin{cases}
\displaystyle\frac{1}{x_1\cdots x_n}\,\frac{2}{x_j^2},&\text{ if }j=k,\\
\displaystyle\frac{1}{x_1\cdots x_n}\,\frac{1}{x_j x_k},&\text{ if }j\neq k.
\end{cases}
\]
Now if $\boldsymbol{z}=(z_1,\dots, z_n)$ is any vector in $\mathbb{R}^n$ and $(x_1,\dots,x_n)\in\mathbb{R}_{>0}^n$, we can write
\begin{align*}
\langle\boldsymbol{z},\boldsymbol{H}\boldsymbol{z}\rangle&=\sum_{j=1}^n z_j\sum_{k=1}^n h_{jk}\, z_k\\
&=\frac{1}{x_1\cdots x_n}\sum_{j=1}^n \left(\frac{2z_j^2}{x_j^2}+\sum_{k\neq j} \frac{z_j z_k}{x_j x_k}\right)\\
&=\frac{1}{x_1\cdots x_n}\left(\left(\sum_{j=1}^n \frac{z_j^2}{x_j^2}\right)+\left(\sum_{j=1}^n \frac{z_j}{x_j} \right)^2 \right)\\
&\geq 0.
\end{align*}
\end{proof}

\begin{lemma}\label{lem:optimization}
For any $n\in \mathbb{N}$ we have
\begin{equation}\label{eq:convex_program}
\inf\left\{\sum_{j=1}^n \frac{x_j^2}{n-j+1}\middle|(x_1,\dots,x_n)\in \mathbb{R}_{> 0}^n\text{ \emph{and} }x_1\cdots x_n\geq n!\right\}=n\sqrt[n]{n!}.
\end{equation}
\end{lemma}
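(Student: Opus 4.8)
The plan is to reduce the constrained minimization to a single application of the weighted arithmetic--geometric mean inequality, which will simultaneously produce the lower bound and, via its equality case, a minimizer. Writing $c_j := 1/(n-j+1)$, the objective is $f_n(x)=\sum_{j=1}^n c_j x_j^2$, and the key structural observation is that the product of the weights telescopes to the reciprocal of the constraint value:
\[
\prod_{j=1}^n c_j = \prod_{k=1}^n \frac{1}{k} = \frac{1}{n!}.
\]

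For the lower bound, I would apply AM--GM to the $n$ positive numbers $c_j x_j^2$, which gives
\[
\frac{1}{n}\sum_{j=1}^n c_j x_j^2 \;\geq\; \left(\prod_{j=1}^n c_j x_j^2\right)^{1/n} = \left(\prod_{j=1}^n c_j\right)^{1/n}\left(\prod_{j=1}^n x_j\right)^{2/n} = (n!)^{-1/n}\left(\prod_{j=1}^n x_j\right)^{2/n}.
\]
Invoking the feasibility constraint $\prod_{j=1}^n x_j \geq n!$ then yields $\frac{1}{n}\sum_j c_j x_j^2 \geq (n!)^{-1/n}(n!)^{2/n} = (n!)^{1/n}$, that is, $f_n(x)\geq n\sqrt[n]{n!}$ for every feasible $x$. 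Hence the infimum in \eqref{eq:convex_program} is at least $n\sqrt[n]{n!}$.

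For the reverse inequality I would exhibit a feasible point attaining this value. Equality in AM--GM holds exactly when all terms $c_j x_j^2$ share a common value $\lambda>0$, which forces $x_j=\sqrt{\lambda(n-j+1)}$. Then $\prod_j x_j = \lambda^{n/2}\sqrt{n!}$, so the choice $\lambda = (n!)^{1/n}$ makes the constraint active, $\prod_j x_j = n!$, and the objective evaluates to $\sum_j \lambda = n\lambda = n\sqrt[n]{n!}$. This point lies in the feasible set and attains the lower bound, so the infimum equals $n\sqrt[n]{n!}$ and is in fact a minimum.

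The argument is short, and the only thing requiring care is the bookkeeping: verifying the telescoping identity $\prod_j c_j = 1/n!$ and checking that the AM--GM equality case is compatible with activating the product constraint. I do not expect a genuine obstacle. I note that this route bypasses the convexity established in Lemma \ref{lem:convexity}; alternatively, one could treat \eqref{eq:convex_program} as a bona fide convex program---minimizing the convex function $f_n$ over the convex feasible region $\{g_n \leq 1/n!\}$---and locate the optimizer through the Lagrange/KKT stationarity conditions, with Lemma \ref{lem:convexity} serving to certify that the resulting stationary point is the global minimum.
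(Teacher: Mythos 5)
Your proof is correct, and it takes a genuinely different route from the paper. The paper treats \eqref{eq:convex_program} as an \emph{ordinary convex program} in the sense of Rockafellar: it first establishes (in Lemma \ref{lem:convexity}, via Hessian computations) that the objective $f_n$ and the constraint function $g_n$ are convex on $\mathbb{R}_{>0}^n$, then invokes \cite[Theorem 28.3]{Rockafellar} and verifies the Kuhn--Tucker conditions at the claimed minimizer $\boldsymbol{\bar{x}}$ with multiplier $\lambda=2\sqrt[n]{n!}$, convexity being what upgrades stationarity to global optimality. Your argument replaces all of this with a single application of AM--GM to the numbers $c_jx_j^2$, exploiting the telescoping identity $\prod_j c_j = 1/n!$: the inequality gives $f_n(x)\geq n\sqrt[n]{n!}$ on the entire feasible set, and its equality case produces a feasible point attaining the bound --- which is in fact the same point $\bar{x}_j=\sqrt{n-j+1}\,(n!)^{1/(2n)}$ that the paper exhibits. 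What your route buys is economy and self-containment: it needs no convexity certification, no differentiability, and no external reference, and it makes Lemma \ref{lem:convexity} (whose only role in the paper is to feed the Rockafellar theorem) entirely dispensable; it also shows directly that the infimum is a minimum. What the paper's route buys is robustness of method: the KKT framework would survive perturbations of the objective or constraint for which no clean AM--GM structure is available, whereas your argument depends on the fortunate coincidence that the weight product exactly cancels the constraint level. For this particular problem, your proof is the shorter and more elementary of the two.
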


\begin{proof}[Proof of Lemma \ref{lem:optimization}]
Following the notation and results of Lemma \ref{lem:convexity}, in order to prove Lemma \ref{lem:optimization}, we need to minimize the convex objective function $f_n$ over the convex domain $\mathbb{R}_{>0}^n$, subject to the convex constraint $n!\,g_n-1\leq 0$. In the nomenclature of \cite{Rockafellar}, this is an \emph{ordinary convex program} and we appeal to Theorem 28.3 of that reference for a solution. Towards this end, we claim that the infimum on the left-hand side of \eqref{eq:convex_program} is attained at 
\[
\boldsymbol{\bar{x}}=\left(\sqrt{n}\sqrt[2n]{n!},\sqrt{n-1}\sqrt[2n]{n!},\dots,\sqrt[2n]{n!}\right),
\]
and that the Kuhn-Tucker coefficient corresponding to the constraint is $\lambda=2\sqrt[n]{n!}$. We verify this by checking the three conditions of \cite[Theorem 28.3]{Rockafellar}.\\

\noindent \textbf{Condition (a)}: inequality constraints

It is clear that $\lambda\geq 0$. Moreover, $n!\,g_n(\boldsymbol{\bar{x}})-1\leq 0$ and $\lambda(n!\,g_n(\boldsymbol{\bar{x}})-1)= 0$ both follow from 
\[
n!\,g_n(\boldsymbol{\bar{x}})-1=\frac{n!}{\sqrt{n!}\sqrt{n!}}-1=0.
\]

\noindent \textbf{Condition (b)}: equality constraints

This condition is vacuously satisfied since there are no equality constraints.\\

\noindent \textbf{Condition (c)}: Lagrangian 

Since $f_n$ and $g_n$ are both differentiable on $\mathbb{R}_{>0}^n$, \textbf{Condition (c)} becomes a statement about the gradient of the Lagrangian instead of its subdifferential. In particular, routine calculations show that
\begin{align}
\nabla f_n(\boldsymbol{x})\big|_{\boldsymbol{x}=\boldsymbol{\bar{x}}}&=\left(\frac{2x_1}{n},\frac{2x_2}{n-1},\dots, \frac{2x_n}{1} \right)\Big|_{\boldsymbol{x}=\boldsymbol{\bar{x}}}\nonumber \\
&=\left(\frac{2\sqrt[2n]{n!}}{\sqrt{n}},\frac{2\sqrt[2n]{n!}}{\sqrt{n-1}},\dots,2\sqrt[2n]{n!}\right)\label{eq:grad_objective}
\end{align}
and
\begin{align}
\lambda \nabla \big(n!\,g_n(\boldsymbol{x})-1\big)\big|_{\boldsymbol{x}=\boldsymbol{\bar{x}}}&=-2\sqrt[n]{n!}\frac{n!}{x_1\cdots x_n}\left(\frac{1}{x_1},\dots, \frac{1}{x_n}\right)\Big|_{\boldsymbol{x}=\boldsymbol{\bar{x}}}\nonumber \\
&=-2\sqrt[n]{n!}\left(\frac{1}{\sqrt{n}\sqrt[2n]{n!}},\frac{1}{\sqrt{n-1}\sqrt[2n]{n!}},\dots,\frac{1}{\sqrt[2n]{n!}}\right)\nonumber \\
&=-\left(\frac{2\sqrt[2n]{n!}}{\sqrt{n}},\frac{2\sqrt[2n]{n!}}{\sqrt{n-1}},\dots,2\sqrt[2n]{n!}\right).\label{eq:grad_constraint}
\end{align}
Adding \eqref{eq:grad_objective} and \eqref{eq:grad_constraint} demonstrates that  
\[
\nabla \Big(f_n(\boldsymbol{x})+\lambda \big(n!\,g_n(\boldsymbol{x})-1\big)\Big)\Big|_{\boldsymbol{x}=\boldsymbol{\bar{x}}}=\boldsymbol{0}.
\] 
This checks \textbf{Condition (c)} and verifies our claim.

Finally, we prove Lemma \ref{lem:optimization} by evaluating the objective function at $\boldsymbol{\bar{x}}$, namely,
\[
f_n(\boldsymbol{\bar{x}})=\sum_{j=1}^n \frac{(n-j+1)\sqrt[n]{n!}}{n-j+1}=n\sqrt[n]{n!}.
\]
\end{proof}

With Lemmas \ref{lem:convexity} and \ref{lem:optimization} in hand, we can begin to prove Theorem \ref{thm:inverse_volume}. The proof of Theorem \ref{thm:inverse_area} is much simpler and appears at the end of this section.

\begin{proof}[Proof of Theorem \ref{thm:inverse_volume}]
The lower bound is a straightforward consequence of part (i) of Proposition \ref{prop:distributional_equalities} together with Jensen's inequality and Eldan's formula \eqref{eq:Eldan} for the expected value of the volume. In particular, these results allow us to write
\begin{align*}
\mathbb{E}\left[\Theta_1^V\right]=\mathbb{E}\left[V_1^{-2/n}\right] &\geq \left(\left(\frac{\pi}{2}\right)^{n/2}\frac{1}{\Gamma(1+n/2)^2}\right)^{-2/n}\\
&=\frac{2}{\pi}\Gamma(1+n/2)^{4/n}.
\end{align*}

Proving the upper bound requires the $n$-stage procedure that was described at the beginning of Section \ref{sec:VS_inverse}. Starting with $T_0=0$, $\boldsymbol{x}_0=\boldsymbol{0}$, and $\mathcal{X}_0=\mathbb{R}^n$, we recursively define $T_j$, $\boldsymbol{x}_j$, and $\mathcal{X}_j$ for $j=1,\dots,n$ by 
\[
T_j=\inf\left\{t>T_{j-1}:\big\|\proj(\boldsymbol{W}_t,\mathcal{X}_{j-1})\big\|\geq r_j\right\},
\]
\[
\boldsymbol{x}_j=\boldsymbol{W}_{T_j},
\]
\[
\mathcal{X}_j=\spn\{\boldsymbol{x}_0,\dots,\boldsymbol{x}_j\}^\perp.
\]
In words, $T_j$ is the first time after $T_{j-1}$ that the orthogonal projection of $\boldsymbol{W}$ onto the linear subspace $\mathcal{X}_{j-1}$ leaves the centered open ball of radius $r_j$, the point $\boldsymbol{x}_j$ is the position of $\boldsymbol{W}$ at time $T_j$, and $\mathcal{X}_j$ is the orthogonal complement in $\mathbb{R}^n$ of the linear span of the vectors $\boldsymbol{x}_0,\dots,\boldsymbol{x}_j$.

With this construction, it is clear that $\{\boldsymbol{x}_0,\boldsymbol{x}_1,\dots,\boldsymbol{x}_n\}\subset\mathcal{H}_{T_n}$. Moreover, 
\begin{align*}
\|\proj\left(\boldsymbol{x}_j,\spn\{\boldsymbol{x}_0,\dots,\boldsymbol{x}_{j-1}\}^\perp\right)\|&=\|\proj\left(\boldsymbol{W}_{T_j},\mathcal{X}_{j-1}\right)\|\\
&= r_j>0,~1\leq j\leq n
\end{align*}
implies that $\{\boldsymbol{x}_1,\dots,\boldsymbol{x}_n\}$ is a linearly independent set of vectors. In particular, it follows that $\boldsymbol{x}_0,\boldsymbol{x}_1,\dots,\boldsymbol{x}_n$ are affinely independent, and therefore constitute the vertices of some $n$-simplex $\mathcal{S}\subset\mathcal{H}_{T_n}$. Since the $n$-parallelotope spanned by the vectors $\boldsymbol{x}_1,\dots,\boldsymbol{x}_n$ can be partitioned into $n!$ copies of $\mathcal{S}$, we deduce that 
\begin{equation}\label{eq:simplex_vol}
V(\mathcal{S})=\frac{1}{n!}\sqrt{\det(\boldsymbol{G})},
\end{equation}
where $\boldsymbol{G}$ is the Gram matrix of the vectors $\boldsymbol{x}_1,\dots,\boldsymbol{x}_n$. Letting $\boldsymbol{M}$ denote the $n\times n$ matrix with columns $\boldsymbol{x}_1,\dots,\boldsymbol{x}_n$, we can write
\begin{equation}\label{eq:Gram_det}
\det(\boldsymbol{G})=\det(\boldsymbol{M}^\top \boldsymbol{M})=\det(\boldsymbol{M})^2.
\end{equation}

The rotational invariance of $\boldsymbol{W}$ allows us to reorient the coordinate axes in a convenient way without affecting the distribution of the stopping times $T_1,\dots,T_n$. This will considerably simplify the computation of $\det(\boldsymbol{G})$. Equivalently, we can apply an orthogonal transformation to each $\boldsymbol{x}_j$ that changes the $j$th coordinate to $r_j$ and the last $n-j$ coordinates to $0$, while fixing the first $j-1$ coordinates. Hence, without loss of generality, we can take $\boldsymbol{x}_j=(x_{1j},\dots, x_{nj})$ with $x_{kj}=r_j$ if $k=j$ and $x_{kj}=0$ if $k>j$. See Figure \ref{fig:procedure} for an illustration of each stage of the construction of $\mathcal{S}$ and reorientation when $n=3$. In particular, this makes $\boldsymbol{M}$ an upper triangular matrix with diagonal entries $r_1,\dots,r_n$. Therefore, $\det(\boldsymbol{M})=r_1\cdots r_n$, and we can conclude from \eqref{eq:simplex_vol} and \eqref{eq:Gram_det} that
\begin{equation}\label{eq:simplex_volume}
V(\mathcal{H}_{T_n})\geq V(\mathcal{S})=\frac{r_1\cdots r_n}{n!}.
\end{equation}


\begin{figure}

\noindent\resizebox{\textwidth}{!}{

\begin{tikzpicture}
    \begin{axis}[
            xmin=-2,xmax=2,
            ymin=-2,ymax=1.5,
            ticks=none,width = 3.in, height = 2.7in]]
            \addplot [domain=0:360,samples=50]({cos(x)},{sin(x)}); 
            \addplot[domain=0:180,samples=50,dashed]({cos(x)},{.3*sin(x)});
            \addplot[domain=180:360,samples=50]({cos(x)},{.3*sin(x)});

            \addplot[domain=90:270,samples=50,dashed]({.5*cos(x)},{sin(x)});
            \addplot[domain=-90:90,samples=50]({.5*cos(x)},{sin(x)});
        \addplot[domain=0:.35355,samples=50,red,dashed]({x},{.70710/.35355*x});
            \fill (0,0) circle[radius=2pt];
            \fill (.35355,.70710) circle[radius=2pt];
            \node[left=.5pt of {(.35355,.70710)},outer sep=1pt]{$\boldsymbol{x}'_1$};
            \node[below =.5pt of {(0,0)},outer sep=1pt]{$\boldsymbol{x}_0$};
            \addplot[domain=-1:0),samples=50]({x},{0});
            \node[above=.01pt of {(-.5,0)},outer sep=.2pt]{$r_1$};
            \node[above =.5pt of {(-1.3,-1.8)},outer sep=1pt]{$\boldsymbol{x}'_1=\begin{bmatrix}x_{11}\\x_{21}\\x_{31}\end{bmatrix}$};
            \draw[{Latex[length=1.5mm]}-{Latex[length=1.5mm]}] (-1,0)--(0,0);
    \end{axis}
\end{tikzpicture}


\begin{tikzpicture}
    \begin{axis}[
            xmin=-2,xmax=2,
            ymin=-2,ymax=1.5,
            ticks=none,width = 3in, height = 2.7in]]
            \addplot [domain=0:360,samples=50]({cos(x)},{sin(x)}); 
            \addplot[domain=0:180,samples=50,dashed]({cos(x)},{.3*sin(x)});
            \addplot[domain=180:360,samples=50]({cos(x)},{.3*sin(x)});
            \node[right=.5pt of {(1,.1)},outer sep=1pt]{$\boldsymbol{x}_1$};
            \node[above =.5pt of {(0,0)},outer sep=1pt]{$\boldsymbol{x}_0$};
            \addplot[domain=0:1),samples=50,red]({x},{0});
              \addplot[domain=-1:0),samples=50]({x},{0});
            \node[above=.01pt of {(-.5,0)},outer sep=.2pt]{$r_1$};
            \node[above =.5pt of {(-1.3,-1.8)},outer sep=1pt]{$\boldsymbol{x}_1=\begin{bmatrix}r_1\\0\\0\end{bmatrix}$};
            \fill (0,0) circle[radius=2pt];
            \fill (1,0) circle[radius=2pt];
            \draw[{Latex[length=1.5mm]}-{Latex[length=1.5mm]}] (-1,0)--(0,0);
    \end{axis}
\end{tikzpicture}

}

\vspace{.8 mm}


\noindent\resizebox{\textwidth}{!}{

\begin{tikzpicture}
    \begin{axis}[
            xmin=-2,xmax=2,
            ymin=-2,ymax=1.2,
            ticks=none,width = 3in, height = 2.5in]
            \addplot[domain=90:3*90,samples=50]({.35*.4*cos(x)+sqrt(2)},{(250/189)*.4*sin(x)});
            \addplot[domain=3*90:5*90,samples=50,dashed]({.35*.4*cos(x)+sqrt(2)},{(250/189)*.4*sin(x)});
            \addplot[domain=90:3*90,samples=50]({.35*.4*cos(x)+sqrt(2)-.8},{(250/189)*.4*sin(x)});
            \addplot[domain=3*90:5*90,samples=50,dashed]({.35*.4*cos(x)+sqrt(2)-.8},{(250/189)*.4*sin(x)});
            \addplot[domain=90:3*90,samples=50]({.35*.4*cos(x)-sqrt(2)},{(250/189)*.4*sin(x)});
            \addplot[domain=3*90:5*90,samples=50,dashed]({.35*.4*cos(x)-sqrt(2)},{(250/189)*.4*sin(x)});
            \addplot[domain=-2:2,samples=20]({x},{.528});
            \addplot[domain=-2:2,samples=20]({x},{-.53});
            \addplot[domain=-sqrt(2):sqrt(2),samples=50,dashed,]({x},{0});
            \addplot[domain=0:1,samples=50,red]({x},{0});
            \node[below=.5pt of {(1,0)},outer sep=1pt]{$\boldsymbol{x}_1$};
            \addplot[domain=1:1.31175-.8,samples=50,dashed,blue]({x},{-.8096*(x-1.31175+.8)+0.35});
            \node[left =.5pt of {(1.31175-.8,0.405)},outer sep=1pt]{$\boldsymbol{x}'_2$};
            \addplot[domain=0:1.31175-.8,samples=50,dashed,blue]({x},{2/3*x});
            \node[below =.5pt of {(0,0)},outer sep=1pt]{$\boldsymbol{x}_0$};
            \addplot[domain=0:-.55,samples=50]({-sqrt(2)},{x});
            \node[above right =.5pt of {(-1.34,-.50)},outer sep=1pt]{$r_{2}$};
            \node[above =.5pt of {(-1.3,-1.8)},outer sep=1pt]{$\boldsymbol{x}_1=\begin{bmatrix}r_1\\0\\0\end{bmatrix}$};
            \node[above =.5pt of {(-.1,-1.8)},outer sep=1pt]{$\boldsymbol{x}'_2=\begin{bmatrix}x_{12}\\x_{22}\\x_{32}\end{bmatrix}$};
            \fill (1,0) circle[radius=2pt];
            \fill (0,0) circle[radius=2pt];
            \fill (1.31175-.8,0.35) circle[radius=2pt];
            \draw[{Latex[length=1.5mm]}-{Latex[length=1.5mm]}] (-1.41421356237,0)--(-1.41421356237,-.53);
    \end{axis}
\end{tikzpicture}


\begin{tikzpicture}
    \begin{axis}[
            xmin=-2,xmax=2,
            ymin=-2,ymax=1.2,
            ticks=none,width = 3in, height = 2.5in]
            \addplot[domain=90:3*90,samples=50]({.35*.4*cos(x)+sqrt(2)-.8},{(250/189)*.4*sin(x)});
            \addplot[domain=3*90:5*90,samples=50,dashed]({.35*.4*cos(x)+sqrt(2)-.8},{(250/189)*.4*sin(x)});
            \addplot[domain=90:3*90,samples=50]({.35*.4*cos(x)+sqrt(2)},{(250/189)*.4*sin(x)});
            \addplot[domain=3*90:5*90,samples=50,dashed]({.35*.4*cos(x)+sqrt(2)},{(250/189)*.4*sin(x)});
            \addplot[domain=0:.75,samples=50,blue]({x},{.2518*x});
           \addplot[domain=1:.75,samples=5,blue]({x},{-.65*(x-.75)+0.175});
            \addplot[domain=90:3*90,samples=50]({.35*.4*cos(x)-sqrt(2)},{(250/189)*.4*sin(x)});
            \addplot[domain=3*90:5*90,samples=50,dashed]({.35*.4*cos(x)-sqrt(2)},{(250/189)*.4*sin(x)});
            \addplot[domain=-2:2,samples=20]({x},{.528});
            \addplot[domain=-2:2,samples=20]({x},{-.53});
            \addplot[domain=-sqrt(2):sqrt(2),samples=50,dashed,]({x},{0});
            \addplot[domain=0:1),samples=50,red]({x},{0});
            \node[below=.5pt of {(1,0)},outer sep=1pt]{$\boldsymbol{x}_1$};
        
            \node[right =.5pt of {(.75,0.205)},outer sep=1pt]{$\boldsymbol{x}_2$};
            
            \node[below =.5pt of {(0,0)},outer sep=1pt]{$\boldsymbol{x}_0$};
            \addplot[domain=0:-.55,samples=50]({-sqrt(2)},{x});
            \node[above right =.5pt of {(-1.34,-.50)},outer sep=1pt]{$r_{2}$};
            \node[above =.5pt of {(-1.3,-1.8)},outer sep=1pt]{$\boldsymbol{x}_1=\begin{bmatrix}r_1\\0\\0\end{bmatrix}$};
            \node[above =.5pt of {(-.1,-1.8)},outer sep=1pt]{$\boldsymbol{x}_2=\begin{bmatrix}x_{12}\\r_2\\0\end{bmatrix}$};
        
            \fill (0,0) circle[radius=2pt];
            \fill (1,0) circle[radius=2pt];
            \fill (.75,0.175) circle[radius=2pt];
            \draw[{Latex[length=1.5mm]}-{Latex[length=1.5mm]}] (-1.41421356237,0)--(-1.41421356237,-.53);
    \end{axis}
\end{tikzpicture}

}

\vspace{.8mm}


\noindent\resizebox{\textwidth}{!}{

\begin{tikzpicture}
    \begin{axis}[
            xmin=-1.3,xmax=2.7,
            ymin=-2.7,ymax=1.7,
            ticks=none,width = 3in, height = 3.3in]
            \node[left =.5pt of {(0,0)},outer sep=.5pt]{$\boldsymbol{x}_0$};
            \addplot[domain=0:.75,samples=50,blue]({x},{.2518*x});
           \addplot[domain=1:.75,samples=5,blue]({x},{-.65*(x-.75)+0.175});
            \node[right=.5pt of {(1,0)},outer sep=1pt]{$\boldsymbol{x}_1$};
            \addplot[domain=.5:2.7,samples=50]({x},{.36*(x-1.9)});
            \addplot[domain=0:1,samples=50,teal,dashed]({x},{-.66*(x-1)-.66505});
            \addplot[domain=0:-.6505,samples=50,teal,dashed]({1},{x});            \addplot[domain=1:.75,samples=50,teal,dashed]({x},{-3.27*(x-1)-.6505});
            \addplot[domain=.72:1.37,samples=50,dashed]({x},{.6-.04});
            \addplot[domain=1.37:2.7,samples=50]({x},{.6-.04});
            \addplot[domain=-1.3:.5,samples=50]({x},{-.5});
            \addplot[domain=-.2:.72,samples=50,dashed]({x},{.36*(x+.9)});
            \addplot[domain=-1.3:-.2,samples=50]({x},{.36*(x+.9)});
            \addplot[domain=.5:2.7,samples=50]({x},{.36*(x-1.9)+.55+.21});
            \addplot[domain=.72:2.7,samples=50]({x},{1.13+.2});
            \addplot[domain=-1.3:.5,samples=50]({x},{-.5+.56+.2});
           \addplot[domain=-1.3:.72,samples=50]({x},{.36*(x+.9)+.55+.2});
           \addplot[domain=.5:2.7,samples=50]({x},{.36*(x-1.9)-.55-.2});
            \addplot[domain=.72:1.42,samples=50,dashed]({x},{.2-.16-.2});
            \addplot[domain=1.42:2.7,samples=50]({x},{.2-.16-.2});
            \addplot[domain=-1.3:.5,samples=50]({x},{-.5-.55-.2});
            \addplot[domain=-.2:.72,samples=50,dashed]({x},{.36*(x+.9)-.55-.2});
            \addplot[domain=-1.3:-.2,samples=50]({x},{.36*(x+.9)-.55-.2});
            \addplot[domain=-.4:.15,samples=50]({2.3},{x});
            \node[above =.5pt of {(2.5,-.27},outer sep=1pt]{$r_3$};
            \addplot[domain=.15:.685,samples=50]({2.3},{x});
            \node[above =.5pt of {(2.5,.27},outer sep=1pt]{$r_3$};
            \node[right =.5pt of {(.75,0.1705)},outer sep=1pt]{$\boldsymbol{x}_2$};
            \addplot[domain=0:1,samples=50,red]({x},{0});
            \node[above =.5pt of {(-1.3+.7,-1.8-.3-.4)},outer sep=1pt]{$\boldsymbol{x}_1=\begin{bmatrix}r_1\\0\\0\end{bmatrix}$};
            \node[above =.5pt of {(-.1+.7,-1.8-.3-.4)},outer sep=1pt]{$\boldsymbol{x}_2=\begin{bmatrix}x_{12}\\r_2\\0\end{bmatrix}$};
            \node[above =.5pt of {(1.1+.7,-1.8-.3-.4)},outer sep=1pt]{$\boldsymbol{x}'_3=\begin{bmatrix}x_{13}\\x_{23}\\x_{33}\end{bmatrix}$};
            \node[right =.5pt of {(1,-.6505)},outer sep=1pt]{$\boldsymbol{x}'_3$};
        
            \fill (1,-0.65) circle[radius=2pt];
            \fill (.75,0.175) circle[radius=2pt];
            \fill (1,0) circle[radius=2pt];
            \fill (0,0) circle[radius=2pt];
            \draw[{Latex[length=1.5mm]}-{Latex[length=1.5mm]}] (2.3,-.6)--(2.3,.15);
\draw[{Latex[length=1.5mm]}-{Latex[length=1.5mm]}] (2.3,.15)--(2.3,.89);
         \end{axis}
\end{tikzpicture}


\begin{tikzpicture}
    \begin{axis}[
            xmin=-1.3,xmax=2.7,
            ymin=-2.7,ymax=1.7,
            ticks=none,width = 3in, height = 3.3in]
            \node[left =.5pt of {(0,0)},outer sep=1pt]{$\boldsymbol{x}_0$};
           \addplot[domain=0:.75,samples=50,blue]({x},{.2518*x});
           \addplot[domain=1:.75,samples=5,blue]({x},{-.65*(x-.75)+0.175});
            \node[right=.5pt of {(1,0)},outer sep=1pt]{$\boldsymbol{x}_1$};
            \addplot[domain=0:.7505,samples=50,teal]({1},{x});
            \addplot[domain=0:1,samples=50,teal]({x},{.78*(x-1)+.7505});
            \addplot[domain=1:.75,samples=50,teal]({x},{2.4*(x-1)+.7505});
            \addplot[domain=.5:2.7,samples=50]({x},{.36*(x-1.9)});
            \addplot[domain=.72:1.37,samples=50,dashed]({x},{.6-.04});
            \addplot[domain=1.37:2.7,samples=50]({x},{.6-.04});
            \addplot[domain=-1.3:.5,samples=50]({x},{-.5});
            \addplot[domain=-.2:.72,samples=50,dashed]({x},{.36*(x+.9)});
            \addplot[domain=-1.3:-.2,samples=50]({x},{.36*(x+.9)});
            \addplot[domain=.5:2.7,samples=50]({x},{.36*(x-1.9)+.55+.21});
            \addplot[domain=.72:2.7,samples=50]({x},{1.13+.2});
            \addplot[domain=-1.3:.5,samples=50]({x},{-.5+.56+.2});
           \addplot[domain=-1.3:.72,samples=50]({x},{.36*(x+.9)+.55+.2});
           \addplot[domain=.5:2.7,samples=50]({x},{.36*(x-1.9)-.55-.2});
            \addplot[domain=.72:1.42,samples=50,dashed]({x},{.2-.16-.2});
            \addplot[domain=1.42:2.7,samples=50]({x},{.2-.16-.2});
            \addplot[domain=-1.3:.5,samples=50]({x},{-.5-.55-.2});
            \addplot[domain=-.2:.72,samples=50,dashed]({x},{.36*(x+.9)-.55-.2});
            \addplot[domain=-1.3:-.2,samples=50]({x},{.36*(x+.9)-.55-.2});
            \addplot[domain=.15:.685,samples=50]({2.3},{x});
            \node[above =.5pt of {(2.5,.27},outer sep=1pt]{$r_3$};
            \addplot[domain=-.4:.15,samples=50]({2.3},{x});
            \node[above =.5pt of {(2.5,-.27},outer sep=1pt]{$r_3$};
            \node[right =.5pt of {(.742,0.205)},outer sep=1pt]{$\boldsymbol{x}_2$};
            \addplot[domain=0:1,samples=50,red]({x},{0});
            \node[above =.5pt of {(-1.3+.7,-1.8-.3-.4)},outer sep=1pt]{$\boldsymbol{x}_1=\begin{bmatrix}r_1\\0\\0\end{bmatrix}$};
            \node[above =.5pt of {(-.1+.7,-1.8-.3-.4)},outer sep=1pt]{$\boldsymbol{x}_2=\begin{bmatrix}x_{12}\\r_2\\0\end{bmatrix}$};
            \node[above =.5pt of {(1.1+.7,-1.8-.3-.4)},outer sep=1pt]{$\boldsymbol{x}_3=\begin{bmatrix}x_{13}\\x_{23}\\r_3\end{bmatrix}$};
            \node[right =.5pt of {(1,.7505)},outer sep=1pt]{$\boldsymbol{x}_3$};
            \fill (.75,0.175) circle[radius=2pt];
            \fill (1,0.75) circle[radius=2pt];
            \fill (1,0) circle[radius=2pt];
            \fill (0,0) circle[radius=2pt];
\draw[{Latex[length=1.5mm]}-{Latex[length=1.5mm]}] (2.3,-.6)--(2.3,.15);
\draw[{Latex[length=1.5mm]}-{Latex[length=1.5mm]}] (2.3,.15)--(2.3,.89);
         \end{axis}
\end{tikzpicture}

}

\caption{Constructing the $3$-simplex $\mathcal{S}$ in $\mathbb{R}^3$, with the Brownian path omitted for clarity. The three rows of figures correspond to the three stages. The left and right columns correspond, respectively, to before and after reorientation.}

\label{fig:procedure}

\end{figure}
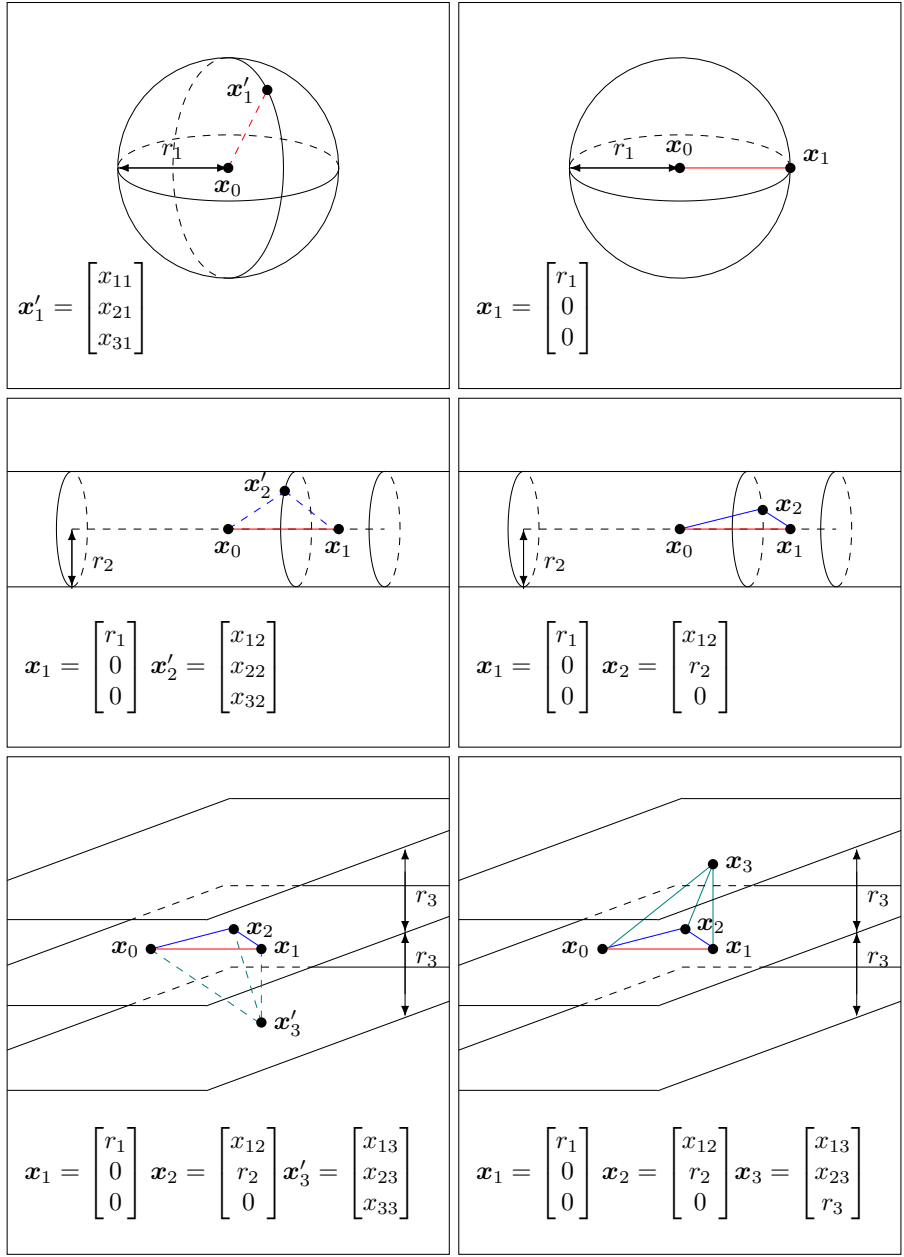


To compute the right-hand side of \eqref{eq:simplex_bound}, we note that by construction,
\[
\proj(\boldsymbol{W}_{T_{j-1}},\mathcal{X}_{j-1})=\boldsymbol{0},~1\leq j\leq n.
\]
Moreover, since $\proj(\boldsymbol{W},\mathcal{X}_{j-1})$ is Brownian motion in $n-j+1$ dimensions, it follows for all $1\leq j\leq n$ that $T_j-T_{j-1}$ is the first exit time from an $n-j+1$ dimensional open ball of radius $r_j$ by $n-j+1$ dimensional Brownian motion starting at the center of the ball. We can now use \eqref{eq:ball_moment} to write 
\begin{equation}\label{eq:simplex_time}
\mathbb{E}[T_n]=\mathbb{E}\left[\sum_{j=1}^n (T_j-T_{j-1})\right]=\sum_{j=1}^n \frac{r_j^2}{n-j+1}.
\end{equation}

It follows from part (i) of Proposition \ref{prop:distributional_equalities} that $\mathbb{E}[\Theta_y^V]$ is an increasing function of the volume $y$. Hence, \eqref{eq:simplex_bound} and \eqref{eq:simplex_volume} together imply that we have
\[
\mathbb{E}\left[\Theta_1^V\right]\leq \mathbb{E}\left[\Theta_{V(\mathcal{S})}^V\right]\leq \mathbb{E}[T_n]
\]
whenever the radii satisfy $r_1\cdots r_n\geq n!$. Now we can conclude from \eqref{eq:simplex_time} and Lemma \ref{lem:optimization} that
\begin{align*}
\mathbb{E}\left[\Theta_1^V\right]&\leq \inf\left\{\sum_{j=1}^n \frac{r_j^2}{n-j+1}\middle|(r_1,\dots,r_n)\in \mathbb{R}_{> 0}^n\text{ and }r_1\cdots r_n\geq n!\right\}\\
&=n\sqrt[n]{n!}.
\end{align*}
\end{proof}

\begin{proof}[Proof of Theorem \ref{thm:inverse_area}]
The lower bound is a straightforward consequence of part (ii) of Proposition \ref{prop:distributional_equalities} together with Jensen's inequality and Eldan's formula \eqref{eq:Eldan} for the expected value of the surface area. In particular, these results allow us to write
\begin{align*}
\mathbb{E}\left[\Theta_1^S\right]=\mathbb{E}\left[S_1^{-2/(n-1)}\right] &\geq \left(\frac{2(2\pi)^{(n-1)/2}}{\Gamma(n)}\right)^{-2/(n-1)}\\
&=\frac{1}{2\pi}\left(\frac{1}{2}\Gamma(n)\right)^{2/(n-1)}.
\end{align*}

To prove the upper bound, first note that at time $\Theta_1^V$, the surface area of the convex hull must be at least that of a ball in $\mathbb{R}^n$ with volume $1$. From the scaling property of Lebesgue measure, we can deduce that a ball in $\mathbb{R}^n$ with volume $1$ has surface area $\omega_n\kappa_n^{(1-n)/n}$. Thus, part (ii) of Proposition \ref{prop:distributional_equalities} implies
\begin{equation}\label{eq:VS_comparison}
\mathbb{E}\left[\Theta_1^V\right]\geq\left( \omega_n\kappa_n^{(1-n)/n}\right)^{-2/(n-1)}\mathbb{E}\left[\Theta_1^S\right].
\end{equation}
Now combining \eqref{eq:VS_comparison} with \eqref{eq:unit_ball} and the upper bound from Theorem \ref{thm:inverse_volume} leads to
\begin{align*}
\mathbb{E}\left[\Theta_1^S\right]&\leq \left( \frac{2\pi^{n/2}}{\Gamma(n/2)}\left(\frac{\pi^{n/2}}{\Gamma(1+n/2)}\right)^{(1-n)/n}\right)^{2/(n-1)} n\sqrt[n]{n!}\\
&=\left(\frac{\sqrt{\pi}\, n}{\sqrt[n]{\Gamma(1+n/2)}}\right)^{2/(n-1)} n\sqrt[n]{n!}.
\end{align*}
\end{proof}

\subsection{Diameter and its inverse process}\label{sec:diameter}

\begin{proof}[Proof of Theorem \ref{thm:diameter}]
We start by proving the lower bound. Note that
\begin{align}
D_1 = \sup_{0\leq s\leq t\leq 1}\|\boldsymbol{W}_s-\boldsymbol{W}_t\|& \geq \sup_{0\leq t\leq 1} \|\boldsymbol{W}_t\|\nonumber \\
&\stackrel{d}{=}\frac{1}{\sqrt{\tau_1}},\label{eq:diameter_law}
\end{align}
where we used $\tau_1$ to denote the exit time of the unit ball in $\mathbb{R}^n$ by $n$-dimensional Brownian motion starting from the center. The equality in distribution in \eqref{eq:diameter_law} is similar to those from Proposition \ref{prop:distributional_equalities} and follows from Brownian scaling; see also \cite[Equation (11)]{max_Bessel}. Taking the expected value of \eqref{eq:diameter_law} while applying Jensen's inequality on the right-hand side and using \eqref{eq:ball_moment} leads to
\[
\mathbb{E}[D_1]\geq \frac{1}{\sqrt{\mathbb{E}[\tau_1]}}= \sqrt{n}.
\]

To prove the upper bound, we consider the hyperrectangle $\mathcal{R}$ circumscribed around $\mathcal{H}_1$ that has each edge parallel to a coordinate axis and use the diagonal of $\mathcal{R}$ to bound $D_1$ from above. This idea was first used by McRedmond and Xu in the planar case; see \cite[Proposition 5]{McRedmond_Xu}. More precisely, we have
\begin{equation*}
D_1 \leq \sqrt{\sum_{i=1}^n\left(\sup_{0\leq t\leq 1} W_t^{(i)}-\inf_{0\leq t\leq 1} W_t^{(i)}\right)^2}.
\end{equation*}
The quantity $\sup_{0\leq t\leq 1} W_t^{(i)}-\inf_{0\leq t\leq 1} W_t^{(i)}$ is nothing but the range of the $i$\textsuperscript{th} coordinate of $\boldsymbol{W}$. In particular, its second moment was computed by Feller in \cite{Feller} and was shown to be $4\log 2$. Hence, it follows from Jensen's inequality that
\begin{align}
\mathbb{E}[D_1] &\leq \sqrt{n\, \mathbb{E}\left[\left(\sup_{0\leq t\leq 1} W_t^{(1)}-\inf_{0\leq t\leq 1} W_t^{(1)}\right)^2\right]}\nonumber \\
&= \sqrt{4n\log 2}. \label{eq:Feller}
\end{align}
\end{proof}

\begin{proof}[Proof of Theorem \ref{thm:inverse_diameter}]
For the lower bound, we can use part (iii) of Proposition \ref{prop:distributional_equalities} along with Jensen's inequality and the upper bound from Theorem \ref{thm:diameter} to write
\[
\mathbb{E}\left[\Theta_1^D\right] \geq \frac{1}{\mathbb{E}[D_1]^2} \geq \frac{1}{4n\log 2}.
\]

For the upper bound, note that as soon as $\boldsymbol{W}$ exits a ball of radius $1$, its convex hull contains a line segment of length at least $1$. Hence, the diameter of the convex hull at this time is at least $1$. Thus, $\Theta_1^D \leq \tau_1$, and from \eqref{eq:ball_moment} we get
\[
\mathbb{E}\left[\Theta_1^D\right]\leq \mathbb{E}[\tau_1]=\frac{1}{n}.
\]
\end{proof}


\subsection{Circumradius and its inverse process}\label{sec:circumradius}

Before proving Theorem \ref{thm:circumradius}, we need a lemma that equates the diameter and twice the circumradius of a centrally symmetric compact set in $\mathbb{R}^n$. A set $\mathcal{C}\subset\mathbb{R}^n$, not necessarily convex, is \emph{centrally symmetric} with respect to the point $\boldsymbol{p}\in\mathbb{R}^n$ if 
\begin{equation}\label{eq:central_symmetry}
\{2\boldsymbol{p}-\boldsymbol{x}:\boldsymbol{x}\in\mathcal{C}\}=\mathcal{C}.
\end{equation}
The fact that the following lemma isn't true for general compact $\mathcal{C}$ is part of the substance of Jung's theorem; see \cite[Theorem 3.3]{Gruber}.

\begin{lemma}\label{lem:diameter_circumradius}
Let $\mathcal{C}\subset\mathbb{R}^n$ be a centrally symmetric compact set. Then 
\begin{equation}\label{eq:diameter_circumradius}
D(\mathcal{C})=2R(\mathcal{C}).
\end{equation}
In particular, \eqref{eq:diameter_circumradius} holds for any hyperrectangle.
\end{lemma}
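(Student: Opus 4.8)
The plan is to reduce everything to a chain of three elementary inequalities after normalizing the center of symmetry to the origin. Since both $D(\mathcal{C})$ and $R(\mathcal{C})$ are translation invariant, I would first assume without loss of generality that $\mathcal{C}$ is centrally symmetric with respect to $\boldsymbol{0}$, so that $\boldsymbol{x}\in\mathcal{C}$ if and only if $-\boldsymbol{x}\in\mathcal{C}$. Because $\mathcal{C}$ is compact, the quantity $\rho:=\max_{\boldsymbol{x}\in\mathcal{C}}\|\boldsymbol{x}\|$ is attained at some point $\boldsymbol{x}^*\in\mathcal{C}$.

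The first inequality is the general bound $D(\mathcal{C})\leq 2R(\mathcal{C})$, valid for any bounded set: if $B$ is a ball of radius $R(\mathcal{C})$ with center $\boldsymbol{p}$ that contains $\mathcal{C}$, then the triangle inequality gives $\|\boldsymbol{x}-\boldsymbol{y}\|\leq\|\boldsymbol{x}-\boldsymbol{p}\|+\|\boldsymbol{p}-\boldsymbol{y}\|\leq 2R(\mathcal{C})$ for all $\boldsymbol{x},\boldsymbol{y}\in\mathcal{C}$, and taking the supremum yields the claim. The second inequality is $R(\mathcal{C})\leq\rho$, which holds because the closed ball centered at the origin with radius $\rho$ contains $\mathcal{C}$ by the definition of $\rho$; since the circumradius is the radius of the smallest enclosing ball, it cannot exceed $\rho$. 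The third inequality is where central symmetry enters: the antipode $-\boldsymbol{x}^*$ also lies in $\mathcal{C}$, so $D(\mathcal{C})\geq\|\boldsymbol{x}^*-(-\boldsymbol{x}^*)\|=2\|\boldsymbol{x}^*\|=2\rho$.

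Chaining these three gives $D(\mathcal{C})\leq 2R(\mathcal{C})\leq 2\rho\leq D(\mathcal{C})$, forcing all the inequalities to be equalities; in particular $D(\mathcal{C})=2R(\mathcal{C})$, which is \eqref{eq:diameter_circumradius}. For the final assertion, I would simply observe that any hyperrectangle is centrally symmetric with respect to its center (the midpoint of any pair of opposite vertices), so the identity applies directly.

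There is essentially no hard step here; the only point requiring care is to avoid needing the circumcenter to coincide with the center of symmetry. The reason for arranging the argument as a cyclic chain of inequalities is precisely to sidestep any appeal to the uniqueness of the smallest enclosing ball, which would be the natural but heavier route to locating the circumcenter at the origin. If one preferred that route instead, the main obstacle would be invoking the standard uniqueness theorem for minimal enclosing balls in Euclidean space (proved via the parallelogram law), but the three-inequality approach bypasses it entirely.
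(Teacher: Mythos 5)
Your proof is correct and follows essentially the same route as the paper: normalize the center of symmetry to the origin, establish $D(\mathcal{C})\leq 2R(\mathcal{C})$ for general bounded sets, bound $R(\mathcal{C})\leq\rho$ via the ball of radius $\rho=\max_{\boldsymbol{x}\in\mathcal{C}}\|\boldsymbol{x}\|$ centered at the origin, and use the antipodal point to get $D(\mathcal{C})\geq 2\rho$, closing the chain. The paper's argument is identical in substance (including the treatment of the hyperrectangle case via central symmetry about its center), so there is nothing to add or correct.
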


\begin{proof}[Proof of Lemma \ref{lem:diameter_circumradius}]
Without loss of generality, we can assume that $\mathcal{C}$ is centrally symmetric with respect to the origin. By compactness, there exists points $\boldsymbol{x},\boldsymbol{y}\in\mathcal{C}$ with $\|\boldsymbol{x}-\boldsymbol{y}\|=D(\mathcal{C})$. Hence, $D(\mathcal{C})\leq 2R(\mathcal{C})$, for otherwise, $\boldsymbol{x}$ and $\boldsymbol{y}$ could not both fit inside the circumball. To prove that the other inequality holds, define $\rho:=\sup\{\|\boldsymbol{x}\|:\boldsymbol{x}\in\mathcal{C}\}$. Certainly $R(\mathcal{C})\leq \rho$, since $\mathcal{C}$ is contained in the closed ball of radius $\rho$ centered at the origin. By compactness, we know there exists some $\boldsymbol{x}\in\mathcal{C}$ with $\|\boldsymbol{x}\|=\rho$. By central symmetry, $-\boldsymbol{x}\in\mathcal{C}$, so we have $D(\mathcal{C})\geq 2\rho$. Putting these two inequalities together gives $D(\mathcal{C})\geq 2R(\mathcal{C})$, which proves the first claim.

We prove the last claim by establishing the central symmetry of any hyperrectangle $\mathcal{R}\subset\mathbb{R}^n$. Without loss of generality, we can assume that each edge of $\mathcal{R}$ is parallel to a coordinate axis and that $\mathcal{R}$ is centered at the origin. More precisely, 
\begin{equation}\label{eq:hyperrectangle}
\mathcal{R}=\big\{(x_1,\dots,x_n)\in\mathbb{R}^n:|x_1|\leq w_1,\dots, |x_n|\leq w_n\big\},
\end{equation}
where $w_1,\dots,w_n\geq 0$ are the coordinate half-widths of $\mathcal{R}$. It is clear from \eqref{eq:hyperrectangle} that $\boldsymbol{x}\in\mathcal{R}$ if and only if $-\boldsymbol{x}\in\mathcal{R}$. Hence, $\mathcal{R}$ satisfies \eqref{eq:central_symmetry} with $\boldsymbol{p}=\boldsymbol{0}$.
\end{proof}

\begin{proof}[Proof of Theorem \ref{thm:circumradius}]
In light of Theorem \ref{thm:diameter}, the lower bound follows immediately from the inequality $D(\mathcal{C})\leq 2R(\mathcal{C})$ that holds for any compact set $\mathcal{C}\subset\mathbb{R}^n$; refer to the proof of Lemma \ref{lem:diameter_circumradius} for an explanation of this inequality. 

For the upper bound, we consider the hyperrectangle $\mathcal{R}$ circumscribing $\mathcal{H}_1$ that has each edge parallel to a coordinate axis. Similarly to the proof of Theorem \ref{thm:diameter}, we can write 
\begin{align}
R_1\leq R(\mathcal{R})&=\frac{1}{2}D(\mathcal{R})\nonumber \\
&=\frac{1}{2}\sqrt{\sum_{i=1}^n\left(\sup_{0\leq t\leq 1} W_t^{(i)}-\inf_{0\leq t\leq 1} W_t^{(i)}\right)^2}, \label{eq:diagonal}
\end{align}
where the first equality follows from Lemma \ref{lem:diameter_circumradius}. Taking  the expected value of \eqref{eq:diagonal}, while using Jensen's inequality on the right-hand side along with Feller's second moment calculation from \eqref{eq:Feller}, results in
\[
\mathbb{E}[R_1]\leq \frac{1}{2}\sqrt{4n\log 2}.
\]
\end{proof}

\begin{proof}[Proof of Theorem \ref{thm:inverse_circumradius}]
For the lower bound, we can use part (iv) of Proposition \ref{prop:distributional_equalities} along with Jensen's inequality and the upper bound from Theorem \ref{thm:circumradius} to write
\[
\mathbb{E}\left[\Theta_1^R\right] \geq \frac{1}{\mathbb{E}[R_1]^2} \geq \frac{1}{n\log 2}.
\]

For the upper bound, note that the trivial inequality $D(\mathcal{H}_t)\leq 2R(\mathcal{H}_t)$ implies that the circumradius of the convex hull will attain $1$ no later than when its diameter attains $2$. Hence, we can use Proposition \ref{prop:distributional_equalities} and Theorem \ref{thm:inverse_diameter} to write
\begin{equation*}
\mathbb{E}\left[\Theta_1^R\right]\leq \mathbb{E}\left[\Theta_2^D\right]\leq 4\frac{1}{n}.
\end{equation*}
\end{proof}

\bibliography{hull_bib}

\providecommand{\bysame}{\leavevmode\hbox to3em{\hrulefill}\thinspace}
\providecommand{\MR}{\relax\ifhmode\unskip\space\fi MR }
\providecommand{\MRhref}[2]{%
  \href{http://www.ams.org/mathscinet-getitem?mr=#1}{#2}
}
\providecommand{\href}[2]{#2}
\begin{thebibliography}{10}

\bibitem{hull_bounds}
W. Cygan, H. Panzo, and S. \v{S}ebek, \emph{Bounds on the size of the convex
  hull of planar {B}rownian motion and related inverse processes},
  arXiv:2307.09440, 2023.

\bibitem{El_Bachir}
M. {El Bachir}, \emph{L'enveloppe covexe du mouvement brownien}, Ph.D. thesis,
  Universit\'{e} Toulouse III, 1983.

\bibitem{Eldan}
R. Eldan, \emph{Volumetric properties of the convex hull of an
  {$n$}-dimensional {B}rownian motion}, Electron. J. Probab. \textbf{19}
  (2014), no. 45, 34. \MR{3210546}

\bibitem{Feller}
W. Feller, \emph{The asymptotic distribution of the range of sums of
  independent random variables}, Ann. Math. Statistics \textbf{22} (1951),
  427--432. \MR{42626}

\bibitem{Gruber}
P.~M. Gruber, \emph{Convex and discrete geometry}, Grundlehren der
  mathematischen Wissenschaften [Fundamental Principles of Mathematical
  Sciences], vol. 336, Springer, Berlin, 2007. \MR{2335496}

\bibitem{Imhof}
J.-P. Imhof, \emph{On the range of {B}rownian motion and its inverse process},
  Ann. Probab. \textbf{13} (1985), no.~3, 1011--1017. \MR{799437}

\bibitem{Jovalekic}
M. Jovaleki\'c, \emph{Lower bound for the diameter of planar {B}rownian
  motion}, Bull. Math. Soc. Sci. Math. Roumanie (N.S.) \textbf{64(112)} (2021),
  no.~3, 281--284. \MR{4316869}

\bibitem{Sobolev_balls}
Z. Kabluchko and D. Zaporozhets, \emph{Intrinsic volumes of {S}obolev balls
  with applications to {B}rownian convex hulls}, Trans. Amer. Math. Soc.
  \textbf{368} (2016), no.~12, 8873--8899. \MR{3551592}

\bibitem{stable_hull}
J. Kampf, G. Last, and I. Molchanov, \emph{On the convex hull of symmetric
  stable processes}, Proc. Amer. Math. Soc. \textbf{140} (2012), no.~7,
  2527--2535. \MR{2898714}

\bibitem{K&S}
I. Karatzas and S.~E. Shreve, \emph{Brownian motion and stochastic calculus},
  second ed., Graduate Texts in Mathematics, vol. 113, Springer-Verlag, New
  York, 1991. \MR{1121940}

\bibitem{Takacs_solution}
G. Letac and L. Tak\'{a}cs, \emph{Problems and {S}olutions: {S}olutions of
  {A}dvanced {P}roblems: 6230}, Amer. Math. Monthly \textbf{87} (1980), no.~2,
  142. \MR{1539300}

\bibitem{Levy}
P. L\'evy, \emph{Processus {S}tochastiques et {M}ouvement {B}rownien. {S}uivi
  d'une note de {M}. {L}o\`eve}, Gauthier-Villars, Paris, 1948. \MR{29120}

\bibitem{McRedmond_Xu}
J. McRedmond and C. Xu, \emph{On the expected diameter of planar {B}rownian
  motion}, Statist. Probab. Lett. \textbf{130} (2017), 1--4. \MR{3692210}

\bibitem{max_Bessel}
J. Pitman and M. Yor, \emph{The law of the maximum of a {B}essel bridge},
  Electron. J. Probab. \textbf{4} (1999), no. 15, 35. \MR{1701890}

\bibitem{Rockafellar}
R.~T. Rockafellar, \emph{Convex analysis}, Princeton Mathematical Series, vol.
  No. 28, Princeton University Press, Princeton, NJ, 1970. \MR{274683}

\bibitem{Schneider}
R. Schneider, \emph{Convex bodies: the {B}runn-{M}inkowski theory}, expanded
  ed., Encyclopedia of Mathematics and its Applications, vol. 151, Cambridge
  University Press, Cambridge, 2014. \MR{3155183}

\end{thebibliography}
\bibliographystyle{amsplainabbrev}

\end{document}